\theoremstyle{plain}
\newtheorem{theorem}{Theorem}[section]
\newtheorem{proposition}[theorem]{Proposition}
\theoremstyle{example}
\newtheorem{question}[theorem]{Question}
\theoremstyle{definition}
\newtheorem{definition}[theorem]{Definition}
\newtheorem{example}[theorem]{Example}
\newtheorem{remark}[theorem]{Remark}
\begin{document}

\title{A construction of smooth travel groupoids on finite graphs}
\author{Diogo Kendy Matsumoto \and Atsuhiko Mizusawa}
\thanks{The first author is partly supported by a Waseda University Grant for Special Research Projects (Project number: 2014A-6133)}
\thanks{The second author is partly supported by a Waseda University Grant for Special Research Projects (Project number: 2014K-6134) and JSPS KAKENHI Grant Number 25287014}

\address{Department of Mathematics, Fundamental Science and Engineering, Waseda University, 3-4-1 Okubo, Shinjuku-ku, Tokyo 169-8555, Japan}
\email[Diogo Kendy Matsumoto]{diogo-swm@aoni.waseda.jp}
\email[Atsuhiko Mizusawa]{a\_mizusawa@aoni.waseda.jp}
\keywords{travel groupoid, finite graph, spanning tree, smooth travel groupoid}

%\institute{Diogo Kendy Matsumoto 
%\at Department of Mathematics, Fundamental Science and Engineering, Waseda University, 3-4-1 Okubo, Shinjuku-ku, Tokyo 169-8555, Japan\\
%\email{diogo-swm@aoni.waseda.jp}
%\and
%Atsuhiko Mizusawa 
%\at Department of Mathematics, Fundamental Science and Engineering, Waseda University, 3-4-1 Okubo, Shinjuku-ku, Tokyo 169-8555, Japan\\
%\email{a\_mizusawa@aoni.waseda.jp}}

%\date{Received: date / Accepted: date}
%\subjclass
%05C05, 05C38, 20N02
%\endsubjclass

\maketitle

\begin{abstract}
%In 2006, L.~Nebesk$\acute{\mbox{y}}$ introduced an algebraic system called a travel groupoid.
A travel groupoid is an algebraic system related with graphs.
In this paper, we give an algorithm to construct smooth travel groupoids for any finite graph.
This algorithm gives an answer of L.~Nebesk$\acute{\mbox{y}}$'s question, 
``Does there exists a connected graph $G$ such that $G$ has no smooth travel groupoid?", in finite cases.
\end{abstract}
\section{Introduction}
Through the study of the algebraic characterization of geodetic graphs \cite{Neb98,Neb02} and trees \cite{Neb00}, 
L.~Nebesk$\acute{\mbox{y}}$ introduced an algebraic system called a travel groupoid in 2006 \cite{Neb06}.
A geodetic graph means a connected graph $G$ with a unique shortest $u$-$v$ path in $G$ for all $u,v\in V(G)$. 
In this paper, graphs have no multiple edges or loops.
\begin{definition}[travel groupoid]
Let $(V, \ast)$ be a groupoid, which is a pair of a non-empty set $V$ and a binary operation $\ast: V\times V \rightarrow V$ on $V$. A groupoid $(V, \ast)$ is called a \textit{travel groupoid} if it satisfies the following two conditions:\\
(t1) $(u\ast v)\ast u=u$ (for all $u, v \in V$);\\
(t2) If $(u\ast v)\ast v=u$, then $u=v$ (for all $u, v \in V$).
\end{definition}
See Proposition 1 for some relations of travel groupoids.
Let $(V,*)$ be a travel groupoid and $G=(V(G),E(G))$ a graph.
We say that $(V,*)$ is \textit{on} $G$ or that $G$ \textit{has} $(V,*)$ if $V(G)=V$ and
\[ E(G)=\{\{u,v\}\,|\,u, v \in V\mbox{ and }u\neq u\ast v=v \}. \]

For $u, v \in V$, we define $u\ast^0 v =u$ and $u\ast^{i+1}v = (u\ast^i v)* v$ for every $i\geq 0$.
For a travel groupoid $(V,*)$ on a graph $G$, $u,v\in V$, and $k\geq 1$,
the sequence 
\begin{equation} \label{intro-walk}
u*^0 v, \cdots, u*^{k-1}v, u*^{k}v
\end{equation}
is a walk in $G$.
This means the travel groupoid has information of the connections of vertices and the choice of walks.

The aim of this paper is to give an algorithm to construct a smooth travel groupoid.
\begin{definition}[smooth]
A travel groupoid $(V, \ast)$ is called \textit{smooth} if it satisfies the condition\\
(t4) if $u*v=u*w$, then $u*(w*v)=u*v$ (for any $u, v, w \in V$).\\
(We use the numberings of the conditions in [6].)
\end{definition}
%A smooth travel groupoid is an important travel groupoid, which is defined as a travel groupoid $(V,*)$ satisfying the following condition,
%\[ \mbox{if} \ u*v=u*w, \ \mbox{then}\ u*(w*v)=u*v \ \mbox{(for any}\  u, v, w \in V \mbox{)}.\]
If $(V,*)$ is a smooth travel groupoid, there is $k \geq 1$ such that $u*^k v= v$ and the sequence (\ref{intro-walk}) is a $u$-$v$ path in $G$.
See Proposition 3 and Proposition 5.

 In \cite{Neb06}, L.~Nebesk$\acute{\mbox{y}}$ proposed three questions about a travel groupoid.
The third question is recently solved in \cite{CPS2}.
Our construction provides an answer to the second question, which is as follows, in the finite case.
\begin{question} \label{q01}
 Does there exist a connected graph $G$ such that $G$ has no smooth travel groupoid? 
\end{question}

 The organization of this paper is as follows. 
In Section 2, we introduce properties of travel groupoids and some important results about travel groupoids.
In Section 3, we present an algorithm to construct smooth travel groupoids and exhibit it with some example.

\section{Definitions}
\par
In this section, we review definitions of some notations and results related to groupoids and graphs for later use.
\begin{definition}[simple]
A travel groupoid $(V, \ast)$ is called \textit{simple} if it satisfies the condition\\
(t3) if $v\ast u \neq u$, then $u\ast (v\ast u) =u\ast v$, for all $u, v \in V$.
\end{definition}
\begin{definition}[non-confusing]
Let $(V, *)$ be a travel groupoid, and take $u, v \in V$ such that $u\neq v$. If there exists $i\geq 3$ such that $u*^i v=u$, then we call the ordered pair $(u,v)$ a \textit{confusing pair} in $(V, *)$. If $(V, *)$ has no confusing pair, then we call it a \textit{non-confusing} travel groupoid.
\end{definition}
\par
We list some results for travel groupoids.
\begin{proposition}[\cite{Neb06}]  \label{prop01}
Let $(V, \ast)$ be a travel groupoid. Then the conditions (t1) and (t2) imply that\\
(1) $u\ast u = u$ for all $u\in V$,\\
(2) $u\ast v=v$ if and only if $v\ast u=u$ for all $u, v \in V$,\\
(3) $u\ast v=u$ if and only if $u=v$ for all $u, v \in V$ and\\
(4) $u\ast(u\ast v)= u\ast v$ for all $u, v \in V$.
\end{proposition}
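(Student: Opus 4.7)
The plan is to prove the four claims in the order listed, using earlier parts to help later ones. The only tools available are (t1), which from any pair yields $(u\ast v)\ast u=u$, and (t2), which acts as a cancellation test of the shape $(x\ast y)\ast y=x \Rightarrow x=y$. So the strategy throughout is to massage instances of (t1) into instances of the hypothesis of (t2).

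For (1), I would set $v=u$ in (t1) to get $(u\ast u)\ast u=u$. Writing $b:=u\ast u$ this says $b\ast u=u$. Composing on the right with $u$ once more gives $(b\ast u)\ast u=u\ast u=b$, which is precisely the hypothesis of (t2) with $x=b$, $y=u$. Therefore $b=u$, i.e.\ $u\ast u=u$. I expect this to be the main obstacle, because idempotency is not syntactically visible in (t1) or (t2); the trick is to notice that after one use of (t1) a further right-multiplication by $u$ returns the expression to $b$, letting (t2) fire.

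For (2), if $u\ast v=v$ then applying (t1) gives $u=(u\ast v)\ast u=v\ast u$; conversely from $v\ast u=u$, applying (t1) in the form $(v\ast u)\ast v=v$ yields $u\ast v=v$. For (3), the direction $u=v\Rightarrow u\ast v=u$ is immediate from (1), while if $u\ast v=u$ then $(u\ast v)\ast v=u\ast v=u$ and (t2) forces $u=v$. Finally for (4), set $w:=u\ast v$; by (t1) we have $w\ast u=(u\ast v)\ast u=u$, and by (2) this is equivalent to $u\ast w=w$, which reads $u\ast(u\ast v)=u\ast v$.
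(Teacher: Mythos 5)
Your proof is correct, and since the paper only cites this proposition from Nebesk\'y's original article without reproducing a proof, there is nothing to diverge from; your argument is the standard one. In particular, the key step for (1) --- setting $b=u\ast u$, using (t1) to get $b\ast u=u$, then right-multiplying to obtain $(b\ast u)\ast u=u\ast u=b$ so that (t2) forces $b=u$ --- is exactly the intended trick, and the remaining parts follow cleanly from (t1), (t2), and the earlier items in the order you use them.
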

\par
% Let $G$ be a graph. We say that a travel groupoid $(V, \ast)$ is \textit{on} $G$ or $G$ \textit{has} $(V, \ast)$ if $V(G)=V$ and $E(G)=\{\{u,v\}\,|\,u, v \in V\mbox{ and }u\neq u\ast v=v\}$. 
% A simple travel groupoid $(V,*)$ on a graph $G=(V, E)$ determine a unique path between every pair of vertices of $G$.
\begin{proposition}[\cite{Neb06}]
Let $(V, *)$ be a simple travel groupoid and let $k\geq 1$. For $x, y \in V$, if $x*^{k-1}y\neq y$ and $x*^k y=y$, then $y*^{k-1}x\neq x$ and $y*^j x= x*^{k-j}y$, where $0\leq j \leq k$.
\end{proposition}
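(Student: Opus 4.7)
The plan is to induct on $k$. The base case $k=1$ is immediate from Proposition 1(2): $x*y = y$ forces $y*x = x$, so $y*^0 x = y = x*y$, $y*^1 x = x$, and $y*^{k-1}x = y \neq x$.

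For the inductive step, write $x_i := x*^i y$, so that $x_0 = x$, $x_k = y$, and $x_{i+1} = x_i * y$. I first record two structural facts, both easy consequences of Proposition 1: (a) $x_i \neq y$ for all $i < k$, since otherwise the walk would absorb at $y$ and contradict $x_{k-1}\neq y$; and (b) $x_i \neq x_{k-1}$ for $i < k-1$, because $x_i = x_{k-1}$ would force $x_{i+1} = x_{k-1}*y = y$, violating (a).

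The two key computations both use (t3). First, (t3) with $u = y$, $v = x_i$ applies by (a) (which gives $v*u = x_{i+1} \neq y = u$) and yields $y*x_{i+1} = y*x_i$; combined with $y*x_{k-1} = x_{k-1}$ (Proposition 1(2) applied to $x_{k-1}*y = y$), this telescopes to $y*x = y*x_0 = y*x_1 = \cdots = y*x_{k-1} = x_{k-1}$, settling the $j=1$ case. Second, (t3) with $u = x_i$, $v = y$ applies by (b) (which gives $v*u = y*x_i = x_{k-1} \neq x_i = u$) and yields $x_i*x_{k-1} = x_i*y = x_{i+1}$. Iterating, the walk obtained by iterating $*x_{k-1}$ from $x$ reproduces $x_0, x_1, \ldots, x_{k-1}$, so the pair $(x, x_{k-1})$ satisfies the hypothesis of the proposition with parameter $k-1$.

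I then invoke the inductive hypothesis on $(x, x_{k-1})$ to get $x_{k-1}*^j x = x_{k-1-j}$ for $0 \leq j \leq k-1$ and $x_{k-1}*^{k-2}x \neq x$. Composing with $y*x = x_{k-1}$ yields $y*^m x = x_{k-1}*^{m-1}x = x_{k-m}$ for $1 \leq m \leq k$, which together with $y*^0 x = y = x_k$ is the desired identity; and $y*^{k-1}x = x_1 \neq x_0 = x$ (since $x \neq y$) delivers the non-triviality clause. The main obstacle is the second (t3)-chain: one has to recognize that simplicity lets one swap the target $y$ for the penultimate vertex $x_{k-1}$, producing a walk of length $k-1$ to which the induction applies. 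Smoothness (t4) is not invoked; only simplicity (t3) is essential.
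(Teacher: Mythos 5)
Your proof is correct, and all the delicate points check out: fact (a) follows from $y*y=y$ and fact (b) from (a); the first (t3)-chain (with $u=y$, $v=x_i$, legitimate because $x_{i+1}\neq y$ for $i\le k-2$) telescopes to $y*x_i=x_{k-1}$ for all $i\le k-1$ after anchoring with Proposition 1(2); the second (t3)-chain (with $u=x_i$, $v=y$, legitimate because $y*x_i=x_{k-1}\neq x_i$ for $i\le k-2$ by (b)) gives $x*^{i}x_{k-1}=x_i$, so the pair $(x,x_{k-1})$ indeed satisfies the hypothesis with parameter $k-1$ and the induction closes; the final clause $y*^{k-1}x=x_1\neq x$ follows from $x\neq y$ and Proposition 1(3). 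Note, however, that the paper itself states this proposition as a quoted result from Nebesk\'{y}'s article \cite{Neb06} and gives no proof of it, so there is no in-paper argument to compare yours against; what you have produced is a self-contained proof of the cited result, using only (t1), (t2) (through Proposition \ref{prop01}) and simplicity (t3), which is consistent with the statement's hypotheses since smoothness plays no role here.
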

\begin{proposition}[\cite{Neb06}]
Let $(V, \ast)$ be a finite travel groupoid on a graph $G$. 
Then $(V, \ast)$ is non-confusing if and only if the following statement holds for all distinct $u, v\in V$:
there exists $k \geq 1$ such that the sequence
\[
u*^0 v, \cdots, u*^{k-1}v, u*^{k}v
\]
is an $u$-$v$ path in $G$.
\end{proposition}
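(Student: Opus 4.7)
The plan is to prove both directions of the biconditional, noting first a standing observation: for any $u,v\in V$ the sequence $u\ast^0 v, u\ast^1 v, \ldots$ is automatically a walk in $G$. Indeed, consecutive terms $x$ and $x\ast v$ satisfy $x\ast(x\ast v)=x\ast v$ by Proposition~1(4), and $x\neq x\ast v$ exactly when $x\neq v$ by Proposition~1(3); so each consecutive pair is an edge of $G$ until the walk arrives at $v$, after which Proposition~1(1) forces the walk to stay at $v$. Hence the real content of the proposition is whether the walk ever reaches $v$, and whether the vertices before that are distinct.

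For the ``if'' direction, assume the path condition holds and fix distinct $u,v\in V$. Let $k\geq 1$ be such that $u\ast^0 v,\ldots,u\ast^k v$ is a $u$-$v$ path. These $k+1$ values are pairwise distinct with $u\ast^k v=v$, and every later iterate also equals $v$ by Proposition~1(1). So for $1\leq i\leq k$ we have $u\ast^i v\neq u$ by path-distinctness, and for $i>k$ we have $u\ast^i v=v\neq u$. Thus no $i\geq 3$ satisfies $u\ast^i v=u$, and $(u,v)$ is not a confusing pair.

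For the ``only if'' direction, assume $(V,\ast)$ is non-confusing and fix distinct $u,v$. Using finiteness of $V$, I would choose the smallest pair of indices $i<j$ with $x:=u\ast^i v=u\ast^j v$. A straightforward induction on $j-i$ shows $x\ast^{j-i}v=x$, so $(x,v)$ is a candidate confusing pair provided $x\neq v$. I split into three cases: if $j-i=1$ then $x\ast v=x$ forces $x=v$ by Proposition~1(3); if $j-i=2$ then $(x\ast v)\ast v=x$ forces $x=v$ by axiom (t2); and if $j-i\geq 3$ with $x\neq v$, then $(x,v)$ is an actual confusing pair, contradicting the hypothesis. Hence $x=v$, so the walk reaches $v$. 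Let $k$ be minimal with $u\ast^k v=v$; repeating the same case analysis on any candidate repeat $0\leq i<j\leq k$ shows the prefix $u\ast^0 v,\ldots,u\ast^k v$ is pairwise distinct, hence a $u$-$v$ path.

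The main obstacle is the ``only if'' direction: the definition of a confusing pair only prohibits the walk from returning to the \emph{starting} vertex $u$ after at least three steps, whereas what we need is to rule out \emph{any} revisit anywhere along the walk. The decisive maneuver is to shift the viewpoint from $(u,v)$ to $(x,v)$, where $x$ is the first repeated vertex, and then handle the short-cycle cases $j-i\in\{1,2\}$ separately via Proposition~1(3) and axiom (t2), since these lengths fall outside the scope of the confusing-pair definition.
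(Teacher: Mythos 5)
The paper states this proposition as a cited result from \cite{Neb06} and gives no proof of its own, so there is no internal argument to compare against; judging your proposal on its own terms, it is correct and complete. Both directions check out: the preliminary observation that the iterates form a walk follows exactly as you say from Proposition 1(3),(4) and the definition of ``on $G$'', the forward direction is immediate from distinctness of path vertices plus $v\ast v=v$, and in the converse your key maneuver --- passing from the pair $(u,v)$ to $(x,v)$ at the first repetition and disposing of the period-$1$ and period-$2$ cases via Proposition 1(3) and axiom (t2) respectively, which is necessary because the confusing-pair definition only covers return times $i\geq 3$ --- is exactly the right way to close the gap between ``never returns to $u$'' and ``never revisits any vertex''. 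The only stylistic remark is that the identity $u\ast^{j}v=(u\ast^{i}v)\ast^{j-i}v$ deserves its one-line induction to be written out, but that is routine.
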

\begin{proposition}[\cite{Neb06}]
If $G$ has a non-confusing travel groupoid, then $G$ is connected.
\end{proposition}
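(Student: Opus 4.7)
The plan is to exhibit, for every pair of distinct vertices $u, v \in V$, a walk in $G$ joining them; the natural candidate is the sequence $w_i := u *^i v$ for $i = 0, 1, 2, \ldots$.

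First I would verify that this sequence traces a walk in $G$ up to the moment it hits $v$. Using Proposition 1(4), $w_i * w_{i+1} = w_i * (w_i * v) = w_i * v = w_{i+1}$, so whenever $w_i \neq w_{i+1}$ the pair $\{w_i, w_{i+1}\}$ is an edge of $G$ by the defining formula for $E(G)$. Moreover, Proposition 1(3) gives $w_i = w_{i+1}$ if and only if $w_i * v = w_i$ if and only if $w_i = v$. Hence $w_0, w_1, \ldots$ traces a walk in $G$ until the first time it reaches $v$, and the remaining task is to show that it does.

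For this I would invoke the non-confusing hypothesis together with (the implicit) finiteness of $V$. By pigeonhole the sequence is eventually periodic with some minimum period $p \geq 1$; pick any vertex $x$ in the periodic orbit, so that $x *^p v = x$. Suppose for contradiction that $v$ does not lie in the orbit, so $x \neq v$. The case $p = 1$ gives $x * v = x$, forcing $x = v$ by Proposition 1(3); the case $p = 2$ gives $(x * v) * v = x$, forcing $x = v$ by axiom (t2); both contradict $x \neq v$. The remaining case $p \geq 3$ makes $(x, v)$ a confusing pair, contradicting the non-confusing hypothesis. So $v$ lies in the orbit, the sequence reaches $v$, and $u$ is connected to $v$ by a walk in $G$. (Alternatively, in the finite case this is an immediate corollary of Proposition 3.)

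The main obstacle is the second step: the non-confusing hypothesis is engineered precisely to kill orbits of length $\geq 3$ that avoid $v$, while the $p = 1$ and $p = 2$ exceptions must be handled separately by Proposition 1(3) and axiom (t2).
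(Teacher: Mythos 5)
Your proof is correct, with the one caveat you yourself flag: finiteness of $V$ has to be assumed, and it is genuinely needed, since the confusing-pair definition of non-confusing does not exclude an orbit $u, u*v, u*^2v,\dots$ that wanders off forever without repeating in an infinite graph; the proposition is meant in the paper's finite setting, just as in Proposition 3. Note that the paper gives no proof of this statement at all (it is quoted from Nebesk\'y), so there is nothing to compare step by step; judged on its own, your argument is sound: Proposition 1(4) gives $w_i * w_{i+1}=w_{i+1}$, so distinct consecutive terms are edges of $G$ by the definition of ``$(V,*)$ on $G$''; Proposition 1(3) identifies stabilization with arrival at $v$; and the eventual-periodicity analysis, with the exceptional periods $p=1$ and $p=2$ killed by Proposition 1(3) and (t2) and the periods $p\geq 3$ killed by non-confusingness, forces $v$ into the terminal cycle. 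In effect you have re-proved the nontrivial direction of Proposition 3, so your parenthetical observation is the shortest route available inside the paper: by Proposition 3 every pair of distinct vertices is joined by a $u$-$v$ path of the form $u*^0v,\dots,u*^kv$, and connectedness of $G$ is immediate; your longer version is a correct, self-contained derivation of the same fact.
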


\begin{proposition}[\cite{Neb06}] A smooth travel groupoid is non-confusing.
\end{proposition}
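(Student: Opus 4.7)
The plan is to argue by contradiction: assume $(V,\ast)$ is smooth and that a confusing pair exists, so there are $u,v\in V$ with $u\neq v$ and $u\ast^{i}v=u$ for some $i\geq 3$. The goal is to derive a contradiction against (t2).

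The heart of the argument is the auxiliary identity
\[
u\ast(u\ast^{k}v)=u\ast v\qquad\text{for every }k\geq 1,
\]
which I would prove by induction on $k$. The base case $k=1$ is exactly clause (4) of the first Proposition, namely $u\ast(u\ast v)=u\ast v$. For the inductive step, set $w:=u\ast^{k}v$; the induction hypothesis reads $u\ast w=u\ast v$, and this triggers the smoothness axiom (t4), yielding $u\ast(w\ast v)=u\ast v$, which is exactly $u\ast(u\ast^{k+1}v)=u\ast v$.

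With this identity available, I would apply it at $k=i-1$ and write $x:=u\ast^{i-1}v$, so that $u\ast x=u\ast v$. On the other hand, $x\ast v=u\ast^{i}v=u$ by hypothesis, so (t1) applied to the pair $(x,v)$ gives $(x\ast v)\ast x=x$, i.e.\ $u\ast x=x$. Comparing the two expressions for $u\ast x$ yields $x=u\ast v$, whence
\[
u\ast^{2}v=(u\ast v)\ast v=x\ast v=u.
\]
But (t2) then forces $u=v$, contradicting $u\neq v$.

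The only non-routine step is spotting that $u\ast(u\ast^{k}v)=u\ast v$ is the right invariant to track along the sequence; once this is identified, (t4) drives the induction and the rest is a direct application of (t1) and (t2). I do not anticipate any significant obstacle beyond noticing this inductive identity, which essentially says that in a smooth travel groupoid the ``first step from $u$ towards $v$'' is unchanged if one replaces the target $v$ by any later iterate $u\ast^{k}v$.
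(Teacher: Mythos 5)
Your proof is correct: the induction establishing $u\ast(u\ast^{k}v)=u\ast v$ for all $k\geq 1$ uses Proposition~\ref{prop01}(4) and (t4) exactly as stated, and combining it at $k=i-1$ with (t1) forces $(u\ast v)\ast v=u$, contradicting (t2); in fact your argument even works for any $i\geq 2$. The paper itself only cites this result from \cite{Neb06} without giving a proof, so there is nothing in the text to compare against; your self-contained argument is the natural one.
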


%We can easily check the following proposition from the definition of smoothness (see also Example \ref{exam01} below).
\begin{proposition}
Let $(V, *)$ be a travel groupoid. Then $(V, \ast)$ is smooth if and only if, for any $u, v \in V$, the set $V_{u, v}=\{w\in V\,|\, u*w=v \}$ is a subgroupoid of $(V, *)$.
\end{proposition}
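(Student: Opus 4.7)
The plan is to unpack both directions directly from the definition of (t4), since this proposition is essentially a reformulation. The key observation is that the hypothesis of (t4), namely $u*v = u*w$, is exactly the statement that both $v$ and $w$ belong to $V_{u, u*v}$, while the conclusion $u*(w*v) = u*v$ says that $w*v$ also belongs to the same set $V_{u, u*v}$. So the content of (t4) is precisely a closure property of the fibers $V_{u,v}$.

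For the forward direction, I would assume $(V,*)$ is smooth and fix $u, v \in V$. To show $V_{u,v}$ is a subgroupoid, take $w_1, w_2 \in V_{u,v}$, meaning $u*w_1 = v = u*w_2$. Applying (t4) with the roles of $v$ and $w$ in the definition replaced by $w_2$ and $w_1$ respectively, we get $u*(w_1*w_2) = u*w_2 = v$, so $w_1 * w_2 \in V_{u,v}$ as required.

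For the converse, assume every $V_{u,v}$ is a subgroupoid, and suppose $u*v = u*w$. Set $v' := u*v = u*w$; then both $v$ and $w$ lie in $V_{u,v'}$. By the subgroupoid hypothesis, $w*v \in V_{u,v'}$, which translates to $u*(w*v) = v' = u*v$. This is exactly (t4).

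The proof involves no obstacle beyond bookkeeping: one must be careful to apply (t4) with the correct substitution of variables in the forward direction, and to recognize the fiber $V_{u, u*v}$ in the reverse direction. No appeal to the other propositions in the excerpt is needed, since the equivalence is purely an unfolding of definitions.
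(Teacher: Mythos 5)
Your proof is correct and matches the paper's argument: both directions are the same direct unfolding of (t4) as a closure property of the fibers $V_{u,v}$, with only the bookkeeping of variable substitution differing in presentation.
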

\begin{proof}
Let $(V,\ast)$ be smooth. If $w, w' \in V_{u,v}$, then $u*w=u*w'=v$. 
From (t4), $u*(w*w')=u*w=u*w'=v$. Therefore $w*w'\in V_{u,v}$.
On the other hand, let $V_{u,v}$ be a subgroupoid of $(V,\ast)$.
If $u*w=u*w'=v$, then $w, w' \in V_{u,v}$ and $w*w' \in V_{u,v}$. 
%Now $V_{u,v}$ is a subgroupoid, then $w*w'\in V_{u,v}$. 
Thus $u*(w*w')=v$.
%\begin{flushright}
%\qed
%\end{flushright}
\end{proof}

The conditions have relationships shown in Figure \ref{rel}.

\begin{figure}[ht] 
$$\raisebox{0pt}{\begin{overpic}[height= 100 pt]{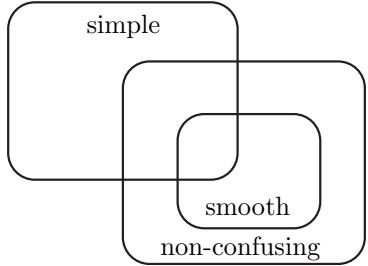}
\put(30,88){simple} \put(58,4){non-confusing} \put(75,19){smooth} 
\end{overpic}}$$
\caption{Relationships among the conditions.} \label{rel}
\end{figure}

\begin{theorem}[\cite{Neb06}]
For every finite connected graph $G$ there exists a simple non-confusing travel groupoid on $G$.
\end{theorem}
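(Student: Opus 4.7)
My plan is to construct $(V,*)$ from a spanning tree $T$ of $G$. For distinct $u, v \in V$, let $u = p_0, p_1, \ldots, p_k = v$ be the unique $u$-$v$ path in $T$. I would define $u * u = u$, and for $u \neq v$ set $u*v = v$ when $\{u,v\} \in E(G)$ and $u*v = p_1$ otherwise. This automatically matches the edge set: every edge $\{u,v\} \in E(G)$ gives $u*v = v$ by fiat, while for a non-edge with $u \neq v$ the value $p_1$ is a $T$-neighbor of $u$, hence different from $v$ (since $\{u,v\} \notin E(T)$).

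Next I would verify the travel groupoid axioms. For (t1), $u*v$ is either $u$ or a $G$-neighbor of $u$, so the edge case gives $(u*v)*u = u$. For (t2) with $u \neq v$: if $u*v = v$ then $(u*v)*v = v \neq u$; if $u*v = p_1$, then $p_1 * v$ is either $v$ (via a shortcut edge) or $p_2$, and neither equals $u = p_0$. For non-confusing, iterating $u \mapsto u*v$ advances monotonically along the $T$-path from $u$ toward $v$, with at most one final shortcut along an edge of $E(G) \setminus E(T)$, so the sequence $u, u*v, u*^2 v, \ldots$ is a simple $u$-$v$ path in $G$; by the preceding characterization of non-confusing travel groupoids in terms of walks becoming paths, one concludes $(V,*)$ is non-confusing.

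The hard part will be simplicity (t3). If $v*u \neq u$, then $\{u,v\}$ is a non-edge with $k \geq 2$ and $v*u = p_{k-1}$, and one must show $u * p_{k-1} = p_1 = u*v$. If $\{u, p_{k-1}\} \notin E(G)$, the $T$-path from $u$ to $p_{k-1}$ is the prefix $p_0, p_1, \ldots, p_{k-1}$ of the $T$-path to $v$, so $u*p_{k-1} = p_1$ as required. The obstruction appears exactly when the non-tree edge $\{u, p_{k-1}\}$ is present and $k \geq 3$: then the edge case forces $u*p_{k-1} = p_{k-1} \neq p_1$, and (t3) fails. Resolving this chord obstruction is the crux of Nebesk\'y's construction in \cite{Neb06}; it requires either choosing $T$ carefully so that no such short-circuiting chord appears on any $T$-path between non-adjacent vertices, or modifying the rule for $u*v$ (for example, having $u*v$ jump to the \emph{farthest} $G$-neighbor of $u$ along the $T$-path to $v$, with matching choices on the $v$-side so that the forward and backward walks reverse one another). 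I would follow such a refined construction to finish the verification of (t3).
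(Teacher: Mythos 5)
Your definition of the groupoid and your checks of (t1), (t2), the edge-set condition, and non-confusingness are sound, but the proof is incomplete exactly where the theorem has its content: simplicity. Condition (t3) is what distinguishes this statement from the easier existence of a non-confusing travel groupoid (which already follows from a single spanning tree via the downward construction), and you leave it unproven. You correctly isolate the obstruction---a chord $\{u,p_{k-1}\}\in E(G)$ with $k\ge 3$ breaks (t3) for the naive first-step rule---but then only gesture at two repairs, neither of which you carry out and neither of which is routine. That every connected graph admits a spanning tree in which no such chord occurs for any non-adjacent pair is an unproved (and far from obvious) combinatorial claim; and the ``jump to the farthest $G$-neighbour along the $T$-path'' rule fails (t3) as stated: take $G=K_4$ minus the edge $\{p_0,p_3\}$ with spanning tree the path $p_0p_1p_2p_3$; then $p_0\ast p_3=p_2$ and $p_3\ast p_0=p_1\neq p_0$, while $p_0\ast(p_3\ast p_0)=p_0\ast p_1=p_1\neq p_2$. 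So the ``matching choices on the $v$-side'' you allude to carry all the weight and are never specified. As it stands you have established only the weaker assertion that every finite connected graph carries a non-confusing travel groupoid. (Note also that the paper under review gives no proof of this theorem; it quotes it from \cite{Neb06}, where the construction achieving simplicity is genuinely more involved than a single spanning-tree rule, so there is no short argument in the paper that your sketch could be matched against.)
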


\begin{definition}[$v$-spanning tree]
Let $G$ be a graph and let $v$ be a vertex of $G$. We say that a spanning tree $T$ of $G$ is a \textit{$v$-spanning tree} if $T$ contains all incident edges of $v$. 
A $v$-spanning tree is called a \textit{$v$-tree} in \cite{CPS1}. 
\end{definition}
\par
In \cite{CPS1}, the number of non-confusing travel groupoids $(V, \ast)$ on a finite connected graph $G=(V, E)$ was determined by the product of the numbers of $v$-spanning trees of $G$ for all vertex $v\in V$.

\par
Let $G=(V, E)$ be a geodetic graph; a tree for example. 
For $u, v (\neq u) \in V$, there is a unique vertex $A_G(u,v)$ such that $d(u, A_G(u,v))=1$ and $d(A_G(u,v), v)=d(u,v)-1$, where $d(\,\cdot\,, \,\cdot\,)$ is the distance function. 
% There is a unique vertex $A_G(u, v)$ for $u, v (\neq u) \in V$ which satisfies that $d(u, A_G(u,v))=1$ and $d(A_G(u,v), v)=d(u,v)-1$ where $d(\,\cdot\,, \,\cdot\,)$ is the distance function. 

\section{Main result}
\par
In this section, we first present an algorithm which constructs a travel groupoid from a given finite connected graph.
Then we show an example of a travel groupoid on a graph according to the algorithm, which is smooth. 
Finally, as the main theorem, we prove that travel groupoids constructed by the algorithm are smooth and answer Question \ref{q01} for the finite graph case. 
\subsection{Algorithm} \label{subsec3-01}
\par
Let $G = (V, E)$ be a finite connected graph with no multiple edges and no loops. We construct a groupoid $(V,\ast)$ on $G$ and prove that $(V, \ast)$ is a travel groupoid.
\par
(Step 1)  Fix a vertex $o \in V$ and fix an $o$-spanning tree $T_o$ of $G$. We define a binary operation by $o\ast v = A_{T_o}(o, v)$ for any $v (\neq o) \in V$ and $o\ast o=o$.
\par
(Step 2) For any vertex $u (\neq o) \in V$, we construct a $u$-spanning tree $T_u$ from $T_o$.
Let $S(u)$ be the set of all the edges incident to $u$. 
Consider the set $L(u)=(E\setminus E(T_o)) \cap S(u)$ of edges, where $E(T_o)$ is the set of edges of $T_o$. 
Add an edge $e$ in $L(u)$ to $T_o$, then $T_o \cup \{e\}$ has a cycle. 
We then remove the edge on the cycle which is incident with the edge $e$ at the incident vertex of $e$ other than $u$.
By this, we obtain a new tree.
Now repeat this operation with the new tree in the place of $T_o$ for every edge in $L(u)$. 
Then, we will finally arrive at a $u$-spanning tree, and we define $T_u$ to be this tree.
\par
(Step 3)
We define a binary operation by $u\ast v = A_{T_u}(u, v)$ for any $v (\neq u) \in V$ and $u\ast u=u$. 

%Then we remove the incident edge of the opposite vertex of $e$ to $u$ which is in the cycle and have another tree. Repeating this operation for every edge in $L(u)$, we have a $u$-spanning tree and define $T_u$ by this tree.

\begin{remark}
For any vertex $u\, (\neq o) \in V$, the $u$-spanning tree $T_u$ derived from $T_o$ in the Step 2 is determined uniquely.
\end{remark}

\begin{proposition} Let $G=(V, E)$ be a finite connected graph. Then, groupoids constructed by the algorithm are travel groupoids on $G$.
\end{proposition}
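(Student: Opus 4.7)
The plan is to describe the resulting groupoid $(V,\ast)$ explicitly in terms of the fixed tree $T_o$, and then to read off the two axioms and the edge condition from that description.

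Step 1 (structure of $T_u$). I first analyze the algorithm to show that it is well-defined and yields a clean description. Fix $u \in V$ and root $T_o$ at $u$; for any $x \neq u$, write $p(x)$ for the parent of $x$ in $T_o$ with this root. Processing one edge $\{u,x\} \in L(u)$ replaces $\{p(x),x\}$ by $\{u,x\}$ and changes the parent only of the single vertex $x$, leaving every other parent unaffected. Hence the algorithm is order-independent and
\[
T_u = \bigl(T_o \setminus \{\{p(x),x\} : x \in L(u)\}\bigr) \cup \{\{u,x\} : x \in L(u)\}.
\]
In $T_u$ rooted at $u$, the parent of a vertex $v \neq u$ is $u$ itself when $v$ is a $G$-neighbor of $u$, and coincides with $p(v)$ otherwise. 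Walking back from $v$ toward $u$ in $T_u$ therefore follows the $T_o$-path until the first $G$-neighbor of $u$ is encountered, at which point it jumps to $u$. This yields the key identity
\[
u \ast v \;=\; A_{T_u}(u,v) \;=\; \text{the last $G$-neighbor of $u$ on the $u$-$v$ path in $T_o$.}
\]

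Step 2 (edge condition and (t1)). Both are short once Step 1 is available. A vertex $v$ is a $T_u$-neighbor of $u$ exactly when $\{u,v\} \in E(G)$, so $u \neq u\ast v = v$ iff $\{u,v\} \in E(G)$, which is the edge condition. For (t1) with $u \neq v$, the vertex $w := u\ast v$ is a $G$-neighbor of $u$, so $\{u,w\}$ is an edge of $G$ and therefore lies in the $w$-spanning tree $T_w$; hence $w \ast u = A_{T_w}(w,u) = u$. The case $u = v$ reduces to Proposition 1(1).

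Step 3 (condition (t2)). This is the main content of the statement. Assume $(u\ast v)\ast v = u$ and, toward a contradiction, that $u \neq v$. Let $w = u \ast v$. Proposition 1(3) gives $w \neq u$, and $w = v$ would force $(u\ast v)\ast v = v \neq u$, so $u$, $v$, $w$ are pairwise distinct. By Step 1, $w$ is a $G$-neighbor of $u$ lying on the $u$-$v$ path in $T_o$, and being different from both endpoints it lies strictly between them on that path. Consequently the $w$-$v$ path in $T_o$ is the tail of the $u$-$v$ path from $w$ to $v$ and in particular does not contain $u$. On the other hand, applying Step 1 to $w \ast v = u$ forces $u$ to be a $G$-neighbor of $w$ lying on the $w$-$v$ path in $T_o$, contradicting the previous sentence.

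The hardest part is Step 1, where one must justify that the sequential cycle-breaking is order-independent and that the parent structure of $T_u$ admits the advertised simple description. Once this is done, (t1), (t2), and the edge condition follow quickly from the explicit description of $u\ast v$ in terms of $T_o$-paths.
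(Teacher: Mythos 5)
Your proof is correct, but it takes a genuinely different route from the paper's. The paper checks (t1) exactly as you do, but proves (t2) by tracking how the unique $u$-$v$ path $P$ in $T_u$ behaves under the change of spanning trees $T_u \to T_o \to T_{u\ast v}$, with a case split according to whether $P$ is preserved; it also leaves the uniqueness of $T_u$ as an unproved Remark and does not explicitly verify the condition $E(G)=\{\{u,v\}\mid u\neq u\ast v=v\}$. You instead first show that the cycle-breaking steps only reassign the parent of the processed vertex (hence commute), which simultaneously proves the paper's Remark and gives the closed-form description of $u\ast v$ as the $G$-neighbor of $u$ on the $u$-$v$ path of $T_o$ nearest to $v$; from this normalization to $T_o$ the edge condition, (t1), and (t2) all follow in a few lines, with (t2) reduced to the observation that $u$ cannot lie on the $w$-$v$ subpath of the $T_o$-path when $w=u\ast v$ is strictly between $u$ and $v$. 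Your approach buys a cleaner and arguably more watertight argument than the paper's ``$P$ is not preserved'' case analysis, and the same closed form would also streamline the smoothness proof of the main theorem. Two cosmetic repairs: your citations of Proposition 1(1) and 1(3) are circular in the letter, since those are derived from (t1)--(t2); replace them by the immediate facts from the construction that $u\ast u=u$ by definition and $u\ast v=A_{T_u}(u,v)\neq u$ for $v\neq u$. Also, in your displayed formula the notation $x\in L(u)$ conflates edges of $L(u)$ with their endpoints other than $u$; harmless, but worth fixing.
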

\begin{proof}
Fix a vertex $o \in V$ and an $o$-spanning tree $T_o$. Then we construct a travel groupoid $(V, \ast)$ according to the algorithm. 
We check that $(V, \ast)$ satisfies the conditions (t1) and (t2).
\par
(t1) For any vertex $u, v\in V$, by the definition of $(V, \ast)$, $u*v$ is an adjacent vertex of $u$. So, the $u*v$-spanning tree $T_{u*v}$ contains the edge connecting $u$ and $u*v$. This implies $(u*v)*u=u$.
\par
(t2) Take $u, v \in V$ such that $u\neq v$. We show that $(u*v)*v\neq u$. Consider the unique $u$-$v$ path $P$ on $T_u$. $u*v$ is on $P$.  If $P$ is preserved in the change of spanning trees from $T_u$ to $T_{u*v}$ via $T_o$, $(u*v)*v=A_{T_{u*v}}(u*v, v)$ is on $P$ and $(u*v)*v\neq u$. We consider the case where $P$ is not preserved in the change of spanning trees from $T_u$ to $T_{u*v}$ via $T_o$. In the change from $T_u$ to $T_o$ the part of the path $P$ between $u*v$ and $v$ is preserved because the removed edges under the change are adjacent to $u$. 
Even though the edge between $u$ and $u*v$ was removed, this edge will be recovered after the change from $T_o$ to $T_{u*v}$ because $u$ is adjacent to $u*v$. 
Since $P$ is not contained in $T_{u*v}$, some edges on $P$ between $u*v$ and $v$ are not on $T_{u*v}$ and $P\cap T_{u*v}$ consists of at least two disjoint parts.
This implies that there are edges, outside of $P$, which connects $u*v$ to some vertices on $P$ beside $u$.
Then one of those vertices is $(u*v)*v$ because $v$ is on $P$ and one of the new edges connects $u*v$ to the part of $P$ on which $v$ lies. Hence, $(u*v)*v\neq u$.
\end{proof}

\begin{remark} 
For a fixed graph $G=(V,E)$, and $v$-spanning trees on $G$ for each vertex $v$, there are two ways to construct groupoids.
We call these ways a \textit{downward} construction and an \textit{upward} construction. 
The downward construction defines the binary operations of a groupoid $(V, \ast)$ by $u\ast v = A_{T_v}(u, v)$ for any $u (\neq v) \in V$ and $v\ast v = v$.
The upward construction defines the binary operations of a groupoid $(V, \ast')$ by $u\ast'v = A_{T_u}(u, v)$ for any $u (\neq v) \in V$ and $v\ast'v = v$.
The downward construction is used in \cite{CPS1},  and we use the upward construction in the algorithm in the subsection \ref{subsec3-01}.
%$v$-spanning trees $T_v$ of a graph $G=(V, E)$ are used for defining the binary operations of a groupoid $(V, \ast)$ by $u\ast v = A_{T_v}(u, v)$ for any $u (\neq v) \in V$ and $v\ast v = v$, which is the opposite direction to our case. 
%We call this way of construction of groupoids on $G$ a \textit{downward} construction and our way an \textit{upward} construction. 
A groupoid constructed by the downward construction is always a travel groupoid (\cite{CPS1}). 
On the other hand, in general, a groupoid constructed by the upward construction without using our algorithm may not be a travel groupoid (see Example \ref{exam04} below).
\end{remark}

\subsection{Example} \label{subsec3-02}
\par
We show an example of a smooth travel groupoid $(V, \ast)$ on a finite connected graph $G_1 = (V, E)$ in Figure \ref{fig01} constructed by the algorithm in the subsection \ref{subsec3-01}.

\begin{figure}[ht] 
$$\raisebox{0pt}{\begin{overpic}[height= 65 pt]{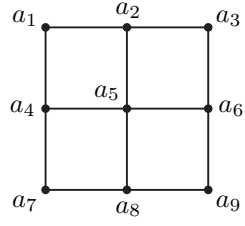}
\put(-11,65){$a_1$} \put(28,68){$a_2$} \put(66,65){$a_3$}
\put(-12,31){$a_4$} \put(20,37){$a_5$} \put(67,31){$a_6$}
\put(-11,-5){$a_7$} \put(28,-7){$a_8$} \put(66,-5){$a_9$}
\end{overpic}}$$
\caption{A graph $G_1$.} \label{fig01}
\end{figure}

\begin{example} \label{exam01}
For every vertex $v\in V$ of $G_1$, we prepare a $v$-spanning tree. We first fix a vertex $a_1$ as $o$ in the algorithm and an $a_1$-spanning tree and then construct $a_i$-spanning trees for other vertices $a_i$ following the algorithm as in Figure \ref{exm02}.
\begin{figure}[ht] 
$$\raisebox{0 pt}{\begin{overpic}[height= 65 pt]{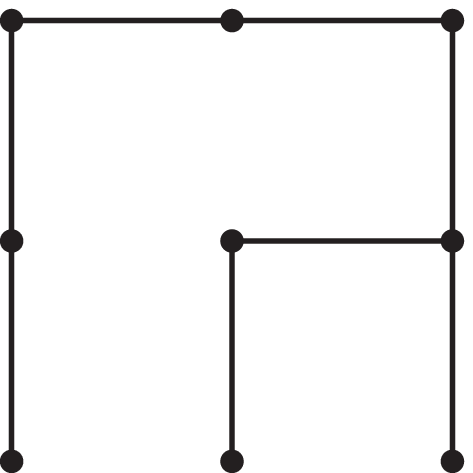}
\put(-11,65){$a_1$} \put(66,65){$a_3$} \put(67,31){$a_6$}
\end{overpic}}
\hspace{1.5cm}
\raisebox{0 pt}{\begin{overpic}[height= 65 pt]{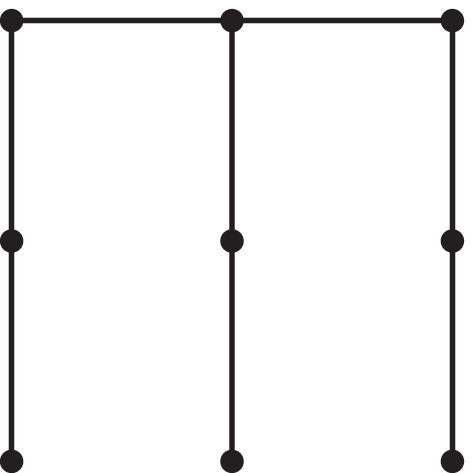}
\put(28,68){$a_2$}
\end{overpic}}
\hspace{1.5cm}
\raisebox{0 pt}{\begin{overpic}[height= 65 pt]{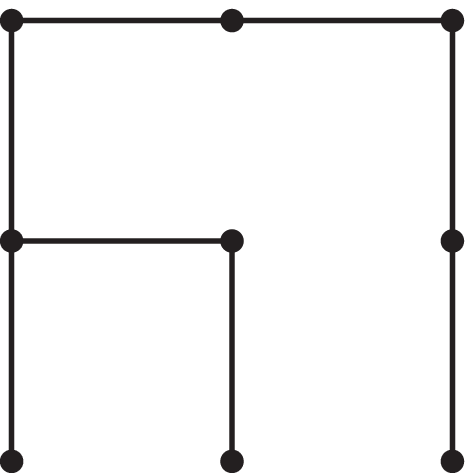}
\put(-12,31){$a_4$}
\end{overpic}}
$$\\
$$\raisebox{0 pt}{\begin{overpic}[height= 65 pt]{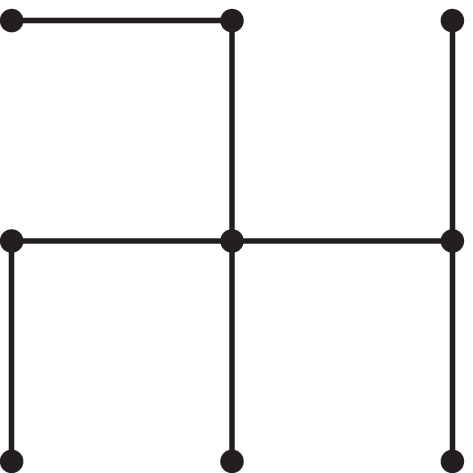}
\put(20,37){$a_5$}
\end{overpic}}
\hspace{1.5cm}
\raisebox{0 pt}{\begin{overpic}[height= 65 pt]{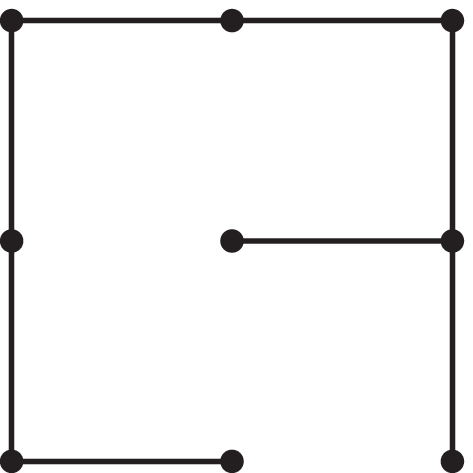}
\put(-11,-5){$a_7$}
\end{overpic}}
$$\\
$$\raisebox{0 pt}{\begin{overpic}[height= 65 pt]{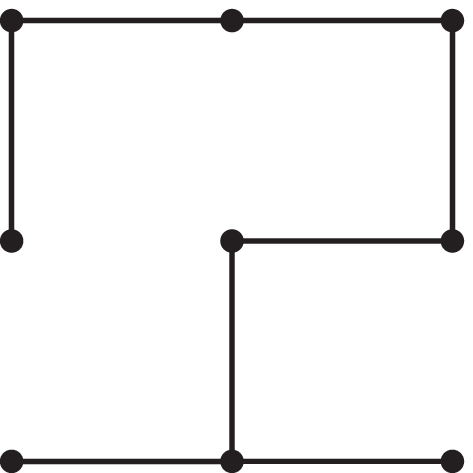}
\put(28,-7){$a_8$}
\end{overpic}}
\hspace{1.5cm}
\raisebox{0 pt}{\begin{overpic}[height= 65 pt]{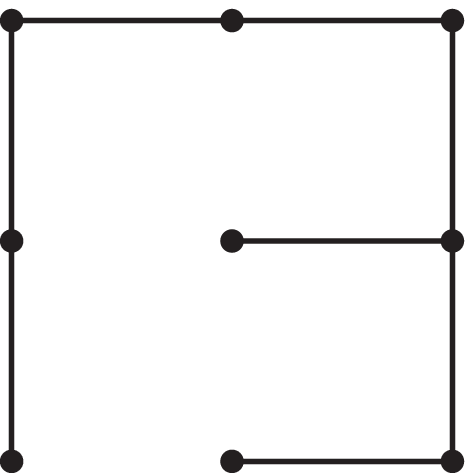}
\put(66,-5){$a_9$}
\end{overpic}}$$
\caption{$v$-spanning trees for $(V, \ast)$.} \label{exm02}
\end{figure}
Here, a tree graph represents an $a_i$-spanning tree if the figure includes the letter of the vertex $a_i$ i.e. $a_1$, $a_3$ and $a_6$ have the same $v$-spanning tree. The travel groupoid $(V, \ast)$ constructed from the spanning trees has the binary operation in Table \ref{table}.
\begin{table}[ht] 
\centering
\caption{Table of binary operations of $(V, \ast)$.} \label{table}
\begin{tabular}{c|c c c c c c c c c }
$*$ & $a_1$ & $a_2$ & $a_3$ & $a_4$ & $a_5$ & $a_6$ & $a_7$ & $a_8$ & $a_9$\\
\hline
$a_1$ & $a_1$ & $a_2$ & $a_2$ & $a_4$ & $a_2$ & $a_2$ & $a_4$ & $a_2$ & $a_2$\\
$a_2$ & $a_1$ & $a_2$ & $a_3$ & $a_1$ & $a_5$ & $a_3$ & $a_1$ & $a_5$ & $a_3$\\
$a_3$ & $a_2$ & $a_2$ & $a_3$ & $a_2$ & $a_6$ & $a_6$ & $a_2$ & $a_6$ & $a_6$\\
$a_4$ & $a_1$ & $a_1$ & $a_1$ & $a_4$ & $a_5$ & $a_1$ & $a_7$ & $a_5$ & $a_1$\\
$a_5$ & $a_2$ & $a_2$ & $a_6$ & $a_4$ & $a_5$ & $a_6$ & $a_4$ & $a_8$ & $a_6$\\
$a_6$ & $a_3$ & $a_3$ & $a_3$ & $a_3$ & $a_5$ & $a_6$ & $a_3$ & $a_5$ & $a_9$\\
$a_7$ & $a_4$ & $a_4$ & $a_4$ & $a_4$ & $a_4$ & $a_4$ & $a_7$ & $a_8$ & $a_4$\\
$a_8$ & $a_5$ & $a_5$ & $a_5$ & $a_5$ & $a_5$ & $a_5$ & $a_7$ & $a_8$ & $a_9$\\
$a_9$ & $a_6$ & $a_6$ & $a_6$ & $a_6$ & $a_6$ & $a_6$ & $a_6$ & $a_8$ & $a_9$\\
\end{tabular}
\end{table}
We can check that for any $u,v,w\in V$, if $u*v=u*w$, then $u*(v*w)=u*v$. For example, if we put $u=a_1$, then for $i=2, 3, 5, 6, 8, 9$, $a_1*a_i=a_2$. We focus on the intersection of $a_i$-rows and $a_j$-columns of the table, where $i, j=2, 3, 5, 6, 8, 9$. In the intersection, the results of products are only $a_i$ $(i=2, 3, 5, 6, 8, 9)$ i.e. $\{a_2, a_3, a_5, a_6, a_8, a_9\}$ is a subgroupoid of $(V, \ast)$. This means that $a_1*(a_i*a_j)=a_2=a_1*a_i$ for $i,j=2, 3, 5, 6, 8, 9$. The similar things hold for other cases and we see that $(V, *)$ is smooth.
\end{example}

\subsection{Main theorem} \label{subsec3-03}
\begin{theorem}
Let $G=(V, E)$ be a finite connected graph. Then a travel groupoid on $G$ constructed by the algorithm in Subsection \ref{subsec3-01} is smooth.
\end{theorem}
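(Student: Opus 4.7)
The plan is to apply the criterion that smoothness of $(V,*)$ is equivalent to every set $V_{u,y} = \{w \in V : u*w = y\}$ being a subgroupoid (or empty). Since $u*w$ is, by construction, either $u$ itself (only when $w = u$) or a neighbor of $u$ in $T_u$, and $T_u$ is a $u$-spanning tree of $G$, the set $V_{u,y}$ can be nonempty only when $y = u$ (in which case $V_{u,u} = \{u\}$ by Proposition~1(3), trivially closed) or when $y$ is a neighbor of $u$ in $G$. So I fix $y$ in the set $N$ of $G$-neighbors of $u$.

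The first step, and the main technical obstacle, is to extract a closed formula for the product. I would show that $T_u$ can be described as a batch of swaps applied to $T_o$: for each neighbor $x$ of $u$ in $G$ that is not a neighbor in $T_o$, one removes the edge from $x$ to its $T_o$-neighbor on the $T_o$-path to $u$ and adds the edge $\{u,x\}$. A short commutation check shows that this description is independent of the order in which the edges of $L(u)$ are processed in Step~2: the added edges and the removed edges are disjoint across distinct $x$'s, and each removal takes place inside a portion of $T_o$ that the other swaps leave intact. Tracing the resulting $T_u$-path from $u$ to an arbitrary $v$ then yields
\[
u * v = \text{the $N$-vertex on the $T_o$-path from $u$ to $v$ closest to $v$.}
\]
All the combinatorial content of the algorithm is packaged into this single formula, which applies uniformly, including when $u = o$ (there $N$ coincides with the set of $T_o$-neighbors of $o$, so the ``closest'' vertex is simply $A_{T_o}(o,v)$).

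Given the formula, for $y \in N$ the set $V_{u,y}$ has a transparent description: rooting $T_o$ at $u$, it consists of $y$ together with every descendant $w$ of $y$ whose $T_o$-path from $y$ to $w$, after leaving $y$, avoids $N$. Equivalently, $V_{u,y}$ is the connected component of $y$ in the forest obtained from $T_o$ by cutting the edge between each $z \in N$ and its $T_o$-parent (with respect to the rooting at $u$). The crucial qualitative consequence is that $V_{u,y}$ induces a connected subtree of $T_o$.

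The last step is then almost immediate. For $w, w' \in V_{u,y}$, applying the formula with $w$ playing the role of $u$ shows that $w * w'$ lies on the $T_o$-path from $w$ to $w'$; that path is contained in $V_{u,y}$ because $V_{u,y}$ is a connected subtree of $T_o$, and therefore $w*w' \in V_{u,y}$. This is exactly the subgroupoid condition the smoothness criterion requires. In summary, all the difficulty is concentrated in the first step; once the closed formula for $u*v$ is in hand, the subgroupoid closure reduces to a remark about connected subtrees of $T_o$.
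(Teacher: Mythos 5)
Your argument is correct, but it is organized quite differently from the paper's. The paper verifies (t4) directly: it introduces the \emph{branches} of $T_u$ (the components of $T_u$ after deleting $u$ and its incident edges), observes that $u*v=u*w$ forces $v$ and $w$ onto a common branch $B$, and then, splitting into the cases $u=o$ and $u\neq o$, tracks how the $v$-$w$ tree path $P\subseteq B$ behaves under the changes of spanning tree $T_u\to T_o\to T_v$ to conclude that $v*w$ still lies on $P$, hence on $B$. You instead front-load a structural lemma: Step 2 is order-independent, and $T_u$ is obtained from $T_o$ by simultaneously reattaching each $x\in N_G(u)$ with $\{u,x\}\notin E(T_o)$ to $u$ and cutting the edge from $x$ to its neighbour on the original $T_o$-path toward $u$; this yields the closed formula that $u*v$ is the vertex of $N_G(u)$ on the $T_o$-path from $u$ to $v$ nearest to $v$ (one can check this formula reproduces Table 1 in the example). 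With it, each fibre $V_{u,y}$ is visibly the vertex set of a connected subtree of $T_o$, and the paper's criterion that smoothness is equivalent to the sets $V_{u,v}$ being closed under $*$ finishes the proof, since $w*w'$ lies on the $T_o$-path joining $w$ and $w'$. Both proofs ultimately rest on the same two facts---the product stays on the relevant $T_o$-path, and the fibre of $u*{\cdot}$ over a neighbour $y$ (which is exactly a branch of $T_u$) is path-closed---but your version packages them into an explicit description of the groupoid, which also makes the paper's Remark on the uniqueness of $T_u$ immediate and avoids the paper's two-case analysis; the paper's version avoids needing any global description of $T_u$. The one place where you assert rather than prove is the ``short commutation check'': the genuine content there is that when $\{u,x\}$ is processed, the current tree still routes $x$ toward $u$ through its original $T_o$-neighbour, so the edge removed is the one prescribed by $T_o$ itself. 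This is true and follows from an easy induction (each swap changes the neighbour-toward-$u$ of no vertex other than the one being reattached), but it is the crux of your approach and should be written out; with it spelled out, your proof is complete.
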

\begin{proof}
Fix a vertex $o \in V$ and an $o$-spanning tree $T_o$. Then construct the travel groupoid $(V, \ast)$ by the algorithm. 
We need only to check the condition (t4): if $u*v=u*w$, then $u*(w*v)=u*v$ for any $u, v, w \in V$. A \textit{branch} of a $u$-spanning tree $T_u$ is a component of the graph $(V(T_u)\setminus u, E(T_u)\setminus S(u))$ where $V(T_u)$ is the set of vertices of $T_u$ and $E(T_u)$ is the set of edges of $T_u$.
\par
(i) We show that if $o*v=o*w$, then $o*(v*w)=o*v$ for any $v, w \in V$. From Proposition \ref{prop01} (3),  this is obvious if $v=o$. We assume $v\neq o$. Since $o*v=o*w$, $v$ and $w$ are on the same branch $B$ of $T_o$. Consider the unique $v$-$w$ path $P$ on $T_o$, which is on $B$. If $P$ is preserved in the change of the spanning trees from $T_o$ to $T_v$, $v\ast w$ is on $P$ and on the branch $B$. Hence $o*(v*w)=o*v$. 
If $P$ is not preserved in the change of the spanning trees from $T_o$ to $T_v$, some edges in $P$ are removed and some edges of $T_v$ connect $v$ and vertices on $P$. This implies that $v\ast w$ is on $P$ and so on $B$. Hence $o*(v*w)=o*v$.
\par
(ii) We show that if $u*v=u*w$, then $u*(v*w)=u*v$ for any $u(\neq o), v, w \in V$. From Proposition \ref{prop01} (3), this is obvious if $v=u$. We assume $v\neq u$. Since $u*v=u*w$, $v$ and $w$ are on the same branch $B$ of $T_u$. Consider the unique $v$-$w$ path $P$ on $T_u$, which is on $B$. We consider the change of the spanning tree from $T_u$ to $T_o$. In the change $P$ is preserved because the edges of $T_u$ which are not of $T_o$ are all incident to $u$ and so not on $B$.
\end{proof}

\par
As a corollary to this theorem, we have an answer to Question \ref{q01} for finite graphs.
\begin{theorem}
For any finite connected graph $G$, there exists a smooth travel groupoid on $G$.
\end{theorem}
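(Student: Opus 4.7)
The plan is to obtain this statement as an immediate corollary of the Main Theorem, once we verify that the input data required by the algorithm in Subsection \ref{subsec3-01} is actually available for an arbitrary finite connected graph. The algorithm starts by fixing a vertex $o \in V$ and an \emph{$o$-spanning tree} $T_o$; everything else is then deterministic. So the only genuine content to check is the existence of some $o$-spanning tree in $G$.

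First, I would pick any vertex $o \in V(G)$; this is possible since $V(G)$ is finite and nonempty. Next, I would argue that $G$ has at least one $o$-spanning tree. The quickest route is to start with the star $S_o$ consisting of $o$ together with all edges $\{o,w\}$ for $w \in N_G(o)$. Since $G$ has no loops or multiple edges, $S_o$ is itself a tree. Because $G$ is connected and finite, one can extend $S_o$ to a spanning tree of $G$ by a standard greedy/BFS argument: while there is a vertex not yet spanned, choose an edge of $G$ joining a spanned vertex to an unspanned one and add it. The resulting tree $T_o$ contains $S_o$, hence contains every edge of $G$ incident to $o$, so it is an $o$-spanning tree.

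With $o$ and $T_o$ in hand, I would then run Steps 1--3 of the algorithm to produce the groupoid $(V,\ast)$. By the proposition stated after the algorithm, $(V,\ast)$ is a travel groupoid on $G$, and by the Main Theorem it satisfies (t4), so it is a smooth travel groupoid on $G$. This directly yields the claimed conclusion.

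I do not expect any serious obstacle: the nontrivial work has already been absorbed into the Main Theorem, which guarantees smoothness of the algorithmic output. The only point that deserves an explicit sentence is the existence of an $o$-spanning tree, and even that is a textbook extension-of-a-subtree argument. An alternative, equally short route would be to invoke the work in \cite{CPS1}, where $v$-spanning trees are shown to exist for every vertex of a finite connected graph, but the self-contained star-extension argument above is cleaner and keeps the proof genuinely elementary.
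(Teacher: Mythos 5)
Your proposal is correct and matches the paper's treatment: the paper presents this statement as an immediate corollary of the main theorem, exactly as you do. Your explicit verification that an $o$-spanning tree exists (by extending the star at $o$ to a spanning tree) is a small detail the paper leaves implicit, but it does not change the route.
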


\subsection{Remarks} In this subsection, we show two examples of groupoids. 
One is a groupoid constructed from the graph $G_2$ drawn in Fig. \ref{fig04} by the upward construction using $v$-spanning trees of $G_2$ (but not following the algorithm in the subsection \ref{subsec3-01}) which is not a travel groupoid. 
This example means that a groupoid constructed by upward construction is not necessarily a travel groupoid on a graph. (In contrast, a groupoid constructed by the downward construction is always a travel groupoid \cite{CPS1}.) Another example is about a travel groupoid on the graph $G_3$ drawn in Fig. \ref{fig05} constructed by the algorithm in the subsection \ref{subsec3-01} which is not simple.

\begin{example}\label{exam04} Let $G_2=(V, E)$ be a graph depicted in the first figure in Figure \ref{fig04}. We construct $v$-spanning trees of vertices $v$ of $G_2$ as the other graphs in Figure \ref{fig04}. Here, a tree graph represents a $v$-spanning tree if the figure includes the letter of the vertex $v$. The groupoid $(V, \ast)$ defined by the trees upwardly is not a travel groupoid because $(a_2*a_5)*a_5=a_3*a_5=a_2\neq a_5$ yielding that $(V, \ast)$ does not satisfy (t2).
\end{example}
\begin{figure}[ht]  
$$\raisebox{0 pt}{\begin{overpic}[height= 60 pt]{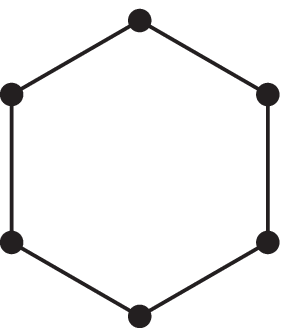}
\put(23,62){$a_1$} \put(54,44){$a_2$}
\put(54,12){$a_3$} \put(23,-7){$a_4$}
\put(-12,12){$a_5$} \put(-12,44){$a_6$}
\end{overpic}}
\hspace{1.7cm}
\raisebox{0 pt}{\begin{overpic}[height= 60 pt]{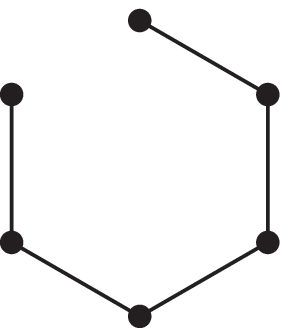}
 \put(54,44){$a_2$}
\put(23,-7){$a_4$}
\put(-12,12){$a_5$}
\end{overpic}}
\hspace{1.7cm}
\raisebox{0 pt}{\begin{overpic}[height= 60 pt]{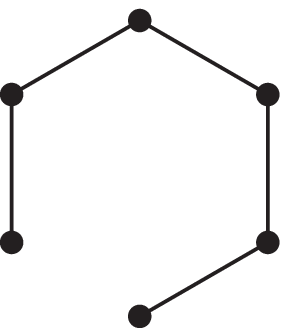}
\put(23,62){$a_1$} \put(54,12){$a_3$} 
\put(-12,44){$a_6$}
\end{overpic}}$$
\caption{A graph $G_2$ and its $v$-spanning trees.} \label{fig04}
\end{figure}
\par
There is a smooth travel groupoid constructed by the algorithm in the subsection \ref{subsec3-01} which is not simple.
\begin{example} Let $G_3=(V, E)$ be a graph depicted in the first figure in Figure \ref{fig05}. 
We construct the travel groupoid $(V,\ast)$ on $G_3$ from $v$-spanning trees of vertices $v$ of $G_3$ by the algorithm in the subsection \ref{subsec3-01}.
The $v$-spanning trees are in Figure \ref{fig05}.
 %as the other graphs in Figure \ref{fig05} by the algorithm in the subsection \ref{subsec3-01}. 
This travel groupoid is not simple because we can easily check that $a_4*a_2\neq a_2$ and $a_2*a_4=a_3\neq a_1 = a_2*(a_4*a_2)$ yielding that $(V, \ast)$ does not satisfy (t3).
\end{example}

\begin{figure}[ht] 
$$\raisebox{0 pt}{\begin{overpic}[height= 70 pt]{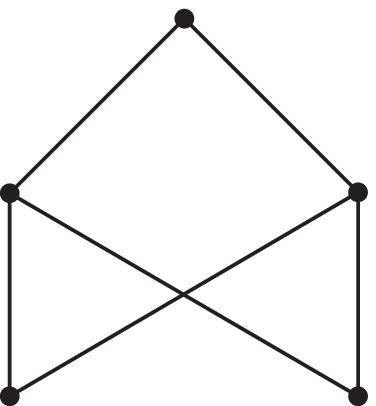}
\put(30,73){$o$}
\put(-12,39){$a_1$} \put(-12,-3){$a_2$}
\put(67,39){$a_3$} \put(67,-3){$a_4$}
\end{overpic}}
\hspace{1.5cm}
\raisebox{0 pt}{\begin{overpic}[height= 70 pt]{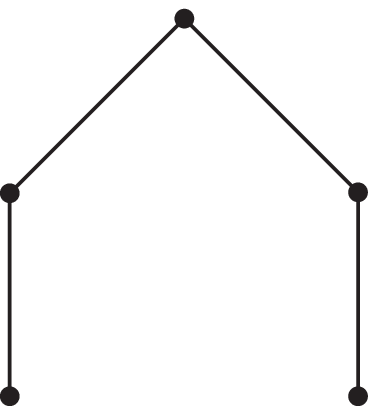}
\put(30,73){$o$}
\end{overpic}}
\hspace{1.5cm}
\raisebox{0 pt}{\begin{overpic}[height= 70 pt]{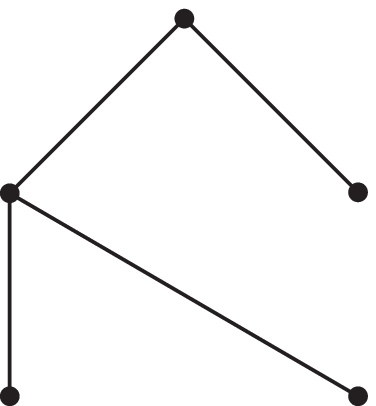}
\put(-12,39){$a_1$}
\end{overpic}}$$
$$\raisebox{0 pt}{\begin{overpic}[height= 70 pt]{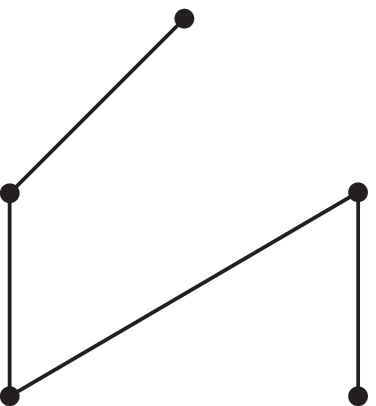}
\put(-12,-3){$a_2$}
\end{overpic}}
\hspace{1.5cm}
\raisebox{0 pt}{\begin{overpic}[height= 70 pt]{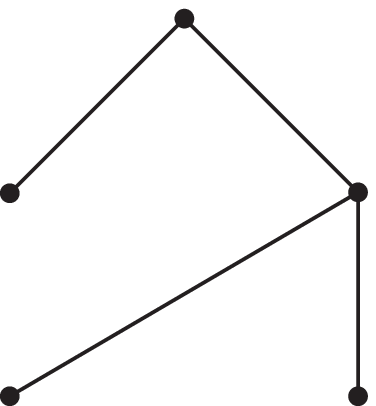}
\put(67,39){$a_3$}
\end{overpic}}
\hspace{1.5cm}
\raisebox{0 pt}{\begin{overpic}[height= 70 pt]{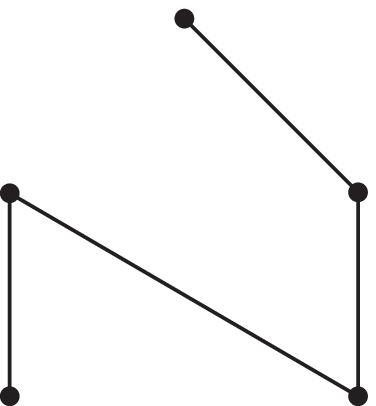}
\put(67,-3){$a_4$}
\end{overpic}}$$
\caption{A graph $G_3$ and its $v$-spanning trees.} \label{fig05}
\end{figure}


\begin{thebibliography}{99}
%\bibitem{gralg} A.~Kelarev: \textit{Graph algebras and automata}, Marcel Dekker, Inc. New York Basel (2003).
\bibitem{CPS1} Cho~J.~R., Park~J., Sano~Y.: \textit{The non-confusing travel groupoids on a finite connected graph}, Lecture Notes in Computer Science \textbf{8845}, 14-17 (2014)
\bibitem{CPS2} Cho~J.~R., Park~J., Sano~Y.: \textit{Travel groupoids on infinite graphs}, Czechoslovak Math. J. \textbf{64}, 763-766 (2014)
\bibitem{Neb98} Nebesk$\acute{\mbox{y}}$~L.: \textit{An algebraic characterization of geodetic graphs}, Czechoslovak Math. J. \textbf{48}(123), 701-710 (1998) %Zbl 0949.05022. MR1658245.
\bibitem{Neb00} Nebesk$\acute{\mbox{y}}$~L.: \textit{A tree as a finite nonempty set with a binary operation}, Math. Bohem. \textbf{125}, 455-458 (2000) %Zbl 0963.05032. MR1802293.
\bibitem{Neb02} Nebesk$\acute{\mbox{y}}$~L.: \textit{New proofs of a characterization of geodetic graphs}, Czechoslovak Math. J. \textbf{52}(127), 33-39 (2002) %Zbl 0995.05124. MR1885455.
\bibitem{Neb06} Nebesk$\acute{\mbox{y}}$~L.: \textit{Travel groupoids}, Czechoslovak Math. J. \textbf{56}(131), 659-675 (2006)%Zbl 1157.20336. MR2291765.
\end{thebibliography}
\end{document}